\def\C{{\mathcal C}}
\def\E{{\mathcal E}}
\def\eps{\varepsilon}
\def\R{\mathbb{R}}
\def\Ray{\mathcal{R}}
\def\T{{\mathbb T}}
\def\oeps{{\mathcal{O}_\eps}}
\def\bal{\begin{aligned}}
\def\eal{\end{aligned}}
\newtheorem{lemma}{Lemma}[section]
\newtheorem*{theorem}{Theorem}
\newtheorem*{basic}{Basic properties of $\E$}
\theoremstyle{definition}
\title[The boundary of the attainable set]{On the boundary of the attainable set\\ of the Dirichlet spectrum}
\author[Lorenzo Brasco]{Lorenzo Brasco}
\address{Laboratoire d'Analyse, Topologie, Probabilit\'es UMR6632, Universit\'e Aix--Marseille 1, CMI 39, Rue Fr\'ed\'eric Joliot Curie, 13453 Marseille Cedex 13, France}
\email{brasco@cmi.univ-mrs.fr}
\author{Carlo Nitsch}
\address{Dipartimento di Matematica e Applicazioni, Universit\`a di Napoli ``Federico II'', Complesso di Monte S. Angelo, Via Cintia, 80126 Napoli, Italy}
\email{c.nitsch@unina.it}
\author{Aldo Pratelli}
\address{Dipartimento di Matematica ``F. Casorati'', Universit\`a di Pavia, via Ferrata 1, 27100 Pavia, Italy}
\email{aldo.pratelli@unipv.it}
\numberwithin{equation}{section}
\begin{document}

\begin{abstract}
Denoting by $\E\subseteq \R^2$ the set of the pairs $\big(\lambda_1(\Omega),\,\lambda_2(\Omega)\big)$ for all the open sets $\Omega\subseteq\R^N$ with unit measure, and by $\Theta\subseteq\R^N$ the union of two disjoint balls of half measure, we give an elementary proof of the fact that $\partial\E$ has horizontal tangent at its lowest point $\big(\lambda_1(\Theta),\,\lambda_2(\Theta)\big)$.
\end{abstract}

\maketitle

\section{Introduction}

Given an open set $\Omega\subseteq \R^N$ with finite measure, its Dirichlet-Laplacian spectrum is given by the numbers $\lambda>0$ such that the boundary value problem
\[
-\Delta u=\lambda\, u\ \mbox{ in }\Omega,\qquad u=0\ \mbox{ on } \partial\Omega,
\]
has non trivial solutions. Such numbers $\lambda$ are called {\it eigenvalues of the Dirichlet-Laplacian in $\Omega$}, and form a discrete increasing sequence $0<\lambda_1(\Omega)\le\lambda_2(\Omega)\le\lambda_3(\Omega)\dots$, diverging to $+\infty$ (see~\cite{He}, for example). In this paper, we will work with the first two eigenvalues $\lambda_1$ and $\lambda_2$, for which we briefly recall the variational characterization: introducing the {\it Rayleigh quotient} as
\[
\Ray_\Omega(u)=\frac{\|\nabla u\|^2_{L^2(\Omega)}}{\|u\|^2_{L^2(\Omega)}},\qquad u\in H^1(\Omega)\,,
\]
the first two eigenvalues of the Dirichlet-Laplacian satisfy
\begin{gather*}
\lambda_1(\Omega)=\min\Big\{\,\Ray_\Omega(u) :\, u\in H^1_0(\Omega)\setminus\{0\}\Big\}\,,\\
\lambda_2(\Omega)=\min\left\{\Ray_\Omega(u)\, :\, u\in H^1_0(\Omega)\setminus\{0\}, \, \int_\Omega u(x)\, u_1(x)\, dx=0\right\}\,,
\end{gather*}
where $u_1$ is a first eigenfunction.\par
We are concerned about the \emph{attainable set} of the first two eigenvalues $\lambda_1$ and $\lambda_2$, that is,
\[
\E:=\Big\{\big(\lambda_1(\Omega),\lambda_2(\Omega)\big)\in\R^2\, :\, \big|\Omega\big|=\omega_N\Big\}\,,
\]
where $\omega_N$ is the volume of the ball of unit radius in $\R^N$. Of course, the set $\E$ depends on the dimension $N$ of the ambient space. The set $\E$ has been deeply studied (see for instance~\cite{AH,BBF,KW}); an approximate plot is shown in Figure~\ref{figure1}. Let us recall now some of the most important known facts. In what follows, we will always denote by $B$ a ball of unit radius (then, of volume $\omega_N$), and by $\Theta$ a disjoint union of two balls of volume $\omega_N/2$.
\begin{basic}%[Basic properties of $\E$]\label{knownfacts}
The attainable set $\E$ has the following properties: 
\begin{enumerate}
\item[(i)] for every $(\lambda_1,\lambda_2)\in\E$ and every $t\geq 1$, one has $(t\, \lambda_1,t\, \lambda_2)\in\E$;
\item[(ii)]
\[
\E\subseteq \bigg\{ x\geq \lambda_1(B),\, y \geq \lambda_2(\Theta),\,  1\leq \frac y x\leq \frac{\lambda_2(B)}{\lambda_1(B)}\, \bigg\}\,\hbox{;}
\]
\item[(iii)] $\E$ is \emph{horizontally} and \emph{vertically convex}, i.e., for every $0\leq t \leq 1$
\begin{align*}
(x_0,y),(x_1,y)\in\E &\Longrightarrow \big((1-t)x_0+tx_1,y\big)\in\E\,, \\
(x,y_0),(x,y_1)\in\E &\Longrightarrow \big(x,(1-t)y_0+ty_1\big)\in\E\,.
\end{align*}
\end{enumerate}
\end{basic}
The first property is a simple consequence of the scaling property $\lambda_i(t\,\Omega) = t^{-2} \lambda_i (\Omega)$, valid for any open set $\Omega\subseteq\R^N$ and any $t>0$. The second property is true because, for every open set $\Omega$ of unit measure, the Faber--Krahn inequality ensures $\lambda_1(\Omega)\geq \lambda_1(B)$, the Krahn--Szego inequality (see~\cite{Hong,Kr,Po}) ensures $\lambda_2(\Omega)\geq \lambda_2(\Theta)=\lambda_1(\Theta)$, and a celebrated result by Ashbaugh and Benguria (see~\cite{AshBen}) ensures
\[
1\leq \frac{\lambda_2(\Omega)}{\lambda_1(\Omega)}\le \frac{\lambda_2(B)}{\lambda_1(B)}.
\]
Finally, the third property is proven in~\cite{BBF}. It has been conjectured also that the set $\E$ is convex, as it seems reasonable by a numerical plot, but a proof for this fact is still not known.\par
\begin{figure}[tb]
\input{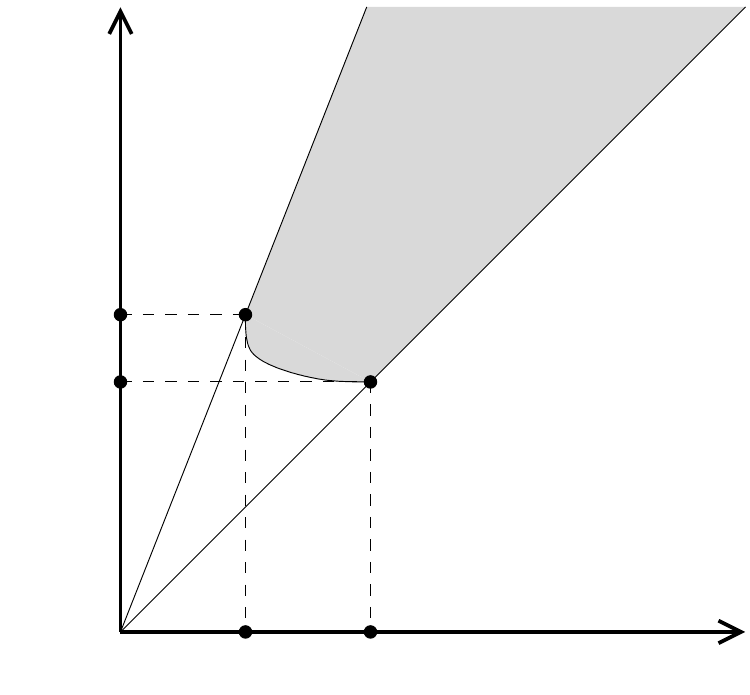_t}
\caption{The attainable set $\E$}\label{figure1}
\end{figure}
Thanks to the above listed properties, the set $\E$ is completely known once one knows its ``lower boundary''
\[
\C := \Big\{ \big(\lambda_1,\,\lambda_2\big)\in \overline{\E} :\, \forall\, t<1,\,\big(t \lambda_1,\,t \lambda_2\big)\notin \E\Big\}\,,
\]
therefore studying $\E$ is equivalent to study $\C$. Notice in particular that $\partial\E$ consists of the union of $\C$ with the two half-lines 
\[
\big\{(t,t):\, t\geq \lambda_1(\Theta)\big\}\qquad\mbox{ and }\qquad\left\{\left(t,\frac{\lambda_2(B)}{\lambda_1(B)}\,t\right):\, t\geq \lambda_1(B)\right\}.
\]
Let us call for brevity $P$ and $Q$ the endpoints of $\C$, that is, $P\equiv \big(\lambda_1(\Theta),\lambda_2(\Theta)\big)$ and $Q\equiv \big(\lambda_1(B),\lambda_2(B)\big)$.\par

The plot of the set $\E$ seems to suggest that the curve $\C$ reaches the point $Q$ with vertical tangent, and the point $P$ with horizontal tangent. In fact, Wolf and Keller in~\cite[Section 5]{KW} proved the first fact, and they also suggested that the second fact should be true, providing a numerical evidence. The aim of the present paper is to give a short proof of this fact.
\begin{theorem}
For every dimension $N\geq 2$, the curve $\C$ reaches the point $P$ with horizontal tangent.
\end{theorem}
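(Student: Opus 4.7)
My plan is to construct a family of admissible domains $\{\Omega_\eps\}_{\eps>0}$ such that $\bigl(\lambda_1(\Omega_\eps),\lambda_2(\Omega_\eps)\bigr)$ approaches $P$ along a curve with vanishing slope, i.e.\ with
\[
\Delta_2(\eps):=\lambda_2(\Omega_\eps)-\lambda_2(\Theta)\ =\ o\bigl(\lambda_1(\Theta)-\lambda_1(\Omega_\eps)\bigr)\ =:\ o\bigl(\Delta_1(\eps)\bigr).
\]
Since $\C$ is the radial inner boundary of $\E$, producing such competitors forces $\C$ itself to leave $P$ along a horizontal tangent: for every $c>0$ the competitors would sit below the line through $P$ of slope $-c$, and by the properties of $\E$ the curve $\C$ must then lie below that line near $P$.

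The first natural candidate I would try is $\Omega_\eps=B_1^\eps\cup B_2^\eps$, a disjoint pair of balls of volumes $\omega_N/2\pm\eps$. Using $\lambda_1(\text{ball of radius }r)=\lambda_1(B)/r^2$ and the volume relation $r_1^N+r_2^N=1$, a direct Taylor expansion gives both $\Delta_1(\eps)$ and $\Delta_2(\eps)$ of the same leading order $4\eps\,\lambda_1(\Theta)/(N\omega_N)$, so that the slope tends to $1$, not to $0$. Hence disjoint pairs of balls are insufficient and I would have to turn to connected competitors.

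A natural refined candidate is a \emph{dumbbell} $\Omega_\eps$: two balls (of possibly slightly different radii) joined by a thin cylindrical tube of cross-sectional radius $\rho_\eps$ and length $L_\eps$, rescaled so that $|\Omega_\eps|=\omega_N$. For such a shape, the min--max characterisation of $\lambda_2$, applied to the two-dimensional subspace spanned by the (zero-extensions of the) first eigenfunctions of the two disjoint balls inscribed in $\Omega_\eps$, gives
\[
\lambda_2(\Omega_\eps)\ \le\ \lambda_1(\text{inscribed ball}),
\]
so that $\Delta_2$ is controlled by the tube's volume alone. Meanwhile the tube's connectivity lets the symmetric first eigenmode spread across both balls, pushing $\lambda_1(\Omega_\eps)$ strictly below the inscribed-ball eigenvalue and providing extra gain to $\Delta_1$ beyond what the disjoint pair can supply.

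The hard part will be the quantitative analysis of the dumbbell's two lowest eigenvalues. One must balance the tube-induced spectral coupling (essentially exponential in $L_\eps/\rho_\eps$, lowering $\lambda_1$) against the polynomial volume penalty (of order $\rho_\eps^{N-1}L_\eps$, raising both eigenvalues by forcing the balls to shrink). Because $\lambda_1(\Theta)=\lambda_2(\Theta)$ is a doubly-degenerate eigenvalue of the limit configuration $\Theta$, standard regular perturbation theory does not apply directly; I would have to rely on carefully designed test functions and a judicious tuning of $(\rho_\eps,L_\eps)$ — perhaps of the form $L_\eps\sim\rho_\eps\log(1/\rho_\eps)$ — in order to verify that the coupling gain strictly dominates the volume defect, yielding $\Delta_2=o(\Delta_1)$ and hence the claimed horizontal tangent.
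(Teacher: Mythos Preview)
Your overall strategy --- construct competitors $\Omega_\eps$ with $\Delta_2(\eps)=o(\Delta_1(\eps))$ and deduce the horizontal tangent from the structure of $\E$ --- is exactly right, and your elimination of unequal disjoint balls is correct. But the proposal stops short of a proof: you explicitly defer ``the hard part'', namely the quantitative comparison of the tube-induced splitting with the volume penalty, and the heuristic you offer for that comparison is in fact pointing in the wrong direction. Tunnelling through a thin Dirichlet tube is \emph{weak}: the amount by which connectivity lowers $\lambda_1$ below the inscribed-ball value is of order $e^{-cL_\eps/\rho_\eps}$ (decaying), so to beat the polynomial volume cost $\rho_\eps^{N-1}L_\eps$ you are forced into the regime $L_\eps=O(\rho_\eps)$, where the tube degenerates and you no longer have a clean asymptotic picture. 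The tuning $L_\eps\sim\rho_\eps\log(1/\rho_\eps)$ you suggest gives a splitting $\sim\rho_\eps^{\,c}$ against a volume $\sim\rho_\eps^{\,N}\log(1/\rho_\eps)$, and whether $c<N$ is not something you can take for granted --- it depends on the precise geometry of the junctions and is nowhere established.

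The paper bypasses the tube altogether: it takes two unit balls \emph{overlapping} by $2\eps$, so the neck has width $\sim\sqrt\eps$ and the overlap volume is only $O(\eps^{(N+1)/2})$. The decisive step is an explicit test function for $\lambda_1$ --- the first eigenfunction of one ball, modified on a cone of volume $\sim\eps^{N/2}$ near the neck and reflected evenly --- which yields $\lambda_1(\Omega_\eps)\le\lambda_1(B)-\gamma_1\eps^{N/2}$. For $\lambda_2$ one simply uses the odd reflection of a cut-off eigenfunction on the half-domain to get $\lambda_2(\Omega_\eps)\le\lambda_1(B)+\gamma_2\eps^{(N+1)/2}$. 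After rescaling to volume $\omega_N$ (itself an $O(\eps^{(N+1)/2})$ correction), one obtains $\Delta_1\gtrsim\eps^{N/2}$ and $\Delta_2\lesssim\eps^{(N+1)/2}$, hence $\Delta_2/\Delta_1\to0$. The key idea you are missing is this overlapping-balls geometry together with the hand-built test function on the cone; it replaces the delicate tunnelling analysis you anticipate by a two-line Rayleigh-quotient computation.
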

The rest of the paper is devoted to prove this result: the proof will be achieved by exhibiting a suitable family $\{\widetilde\Omega_\eps\}_{\eps>0}$ of deformations of $\Theta$ having measure $\omega_N$ and such that
\begin{equation}\label{gothere}
\lim_{\eps\to 0} \frac{\lambda_2(\widetilde\Omega_\eps)-\lambda_2(\Theta)}{\lambda_1(\Theta)-\lambda_1(\widetilde\Omega_\eps)} =0\,.
\end{equation}

\section{Proof of the Theorem}

Throughout this section, for any given $x=(x_1,...,x_N)\in \R^N$, we will write $x=(x_1,x')$ where $x_1\in \R$  and $x'\in\R^{N-1}$.\par
We will make use of the sets $\{\Omega_\eps\}\subseteq\R^N$, shown in Figure~\ref{fig:omeps}, defined by
\[\begin{split}
\Omega_\eps :=& \Big\{ (x_1,x')\in \R^+\times\R^{N-1}:\, (x_1-1+\eps)^2+ |x'|^2 <1 \Big\} \\
&\cup \Big\{ (x_1,x')\in \R^-\times\R^{N-1}:\, (x_1+1-\eps)^2+ |x'|^2 <1 \Big\}\\
=:&\Omega_\eps^+\cup \Omega_\eps^-\,.
\end{split}\]
for every $\eps>0$ sufficiently small. The sets $\widetilde\Omega_\eps$ for which we will eventually prove~(\ref{gothere}) will be rescaled copies of $\Omega_\eps$, in order to have measure $\omega_N$.\par
To get our thesis, we need to provide an upper bound to $\lambda_1(\Omega_\eps)$ and an upper bound to $\lambda_2(\Omega_\eps)$; this will be the content of Lemmas~\ref{upper1} and~\ref{upper2} respectively.
\begin{figure}[htbp]
\input{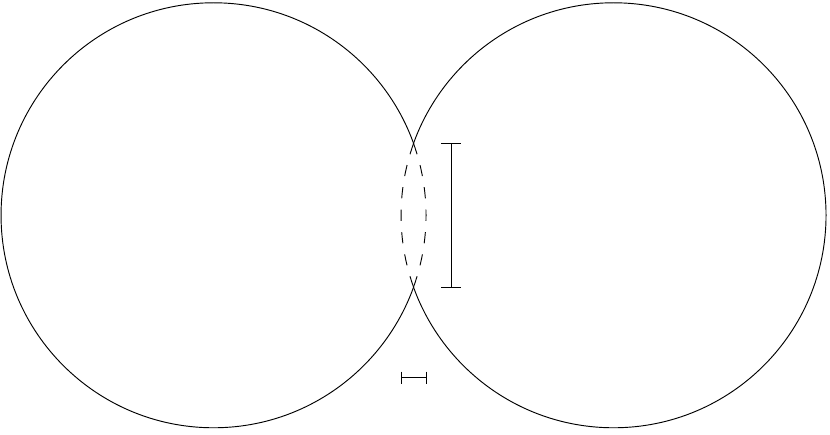_t}
\caption{The sets $\Omega_\eps=\Omega^+_\eps \cup \Omega_\eps^-$}\label{fig:omeps}
\end{figure}
\begin{lemma}\label{upper1}
There exists a constant $\gamma_1>0$ such that for every $\eps\ll 1$ it is
\begin{equation}
\label{seconda}
\lambda_1(\Omega_\eps)\leq \lambda_1(B)-\gamma_1\,\eps^{N/2}.
\end{equation}
\end{lemma}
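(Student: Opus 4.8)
The plan is to prove~\eqref{seconda} by exhibiting a competitor $v\in H^1_0(\Omega_\eps)\setminus\{0\}$ with $\Ray_{\Omega_\eps}(v)\le\lambda_1(B)-\gamma_1\,\eps^{N/2}$. Write $B_\eps^+,B_\eps^-$ for the two unit balls centred at $(1-\eps,0)$ and $(-(1-\eps),0)$, so that $\Omega_\eps=B_\eps^+\cup B_\eps^-$ with overlap lens $L_\eps:=B_\eps^+\cap B_\eps^-$, $|L_\eps|\sim\eps^{(N+1)/2}$. Let $\psi$ be the positive, decreasing radial profile of the first Dirichlet eigenfunction of the unit ball, normalised in $L^2$, put $c_0:=-\psi'(1)>0$ (Hopf), and let $\phi_\pm$ be the corresponding eigenfunction on $B_\eps^\pm$, extended by $0$. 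The naive competitor $w:=\phi_++\phi_-\in H^1_0(\Omega_\eps)$ satisfies, since $-\Delta\phi_\pm=\lambda_1(B)\phi_\pm$ on $B_\eps^\pm$ and $\nabla\phi_+\cdot\nabla\phi_-$ is supported in $\overline{L_\eps}$,
\[
\int_{\Omega_\eps}|\nabla w|^2=2\lambda_1(B)+2\int_{L_\eps}\nabla\phi_+\cdot\nabla\phi_-=2\lambda_1(B)+o(\eps^{N/2}),\qquad\int_{\Omega_\eps}w^2=2+o(\eps^{N/2}),
\]
the error terms being bounded by a multiple of $|L_\eps|\sim\eps^{(N+1)/2}$. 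This already gives $\lambda_1(\Omega_\eps)<\lambda_1(B)$, but only with a gain of order $\eps^{(N+1)/2}$, which is too small; so $w$ must be improved near the neck.

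The key observation is that on $B(0,C\sqrt\eps)$ (with $C>\sqrt2$ a fixed constant), i.e.\ at the scale $\sqrt\eps$ on which the neck lives, $w$ is to leading order the function $c_0|x_1|$ — precisely $\phi_+=c_0\max(x_1,0)+O(\eps)$ and $\phi_-=c_0\max(-x_1,0)+O(\eps)$ there, so $w=c_0|x_1|+O(\eps)$ — and $c_0|x_1|$ is far from harmonic, wasting Dirichlet energy along its crease $\{x_1=0\}$. Accordingly we set $\mathcal N_\eps:=B(0,C\sqrt\eps)\cap\Omega_\eps$ and let $v_\eps$ be the harmonic function on $\mathcal N_\eps$ with $v_\eps=w$ on $\partial B(0,C\sqrt\eps)\cap\Omega_\eps$ and $v_\eps=0$ on $\partial\Omega_\eps\cap B(0,C\sqrt\eps)$; since $w$ itself vanishes on $\partial\Omega_\eps$, the function $v$ equal to $w$ off $\mathcal N_\eps$ and to $v_\eps$ on $\mathcal N_\eps$ belongs to $H^1_0(\Omega_\eps)$, and $w-v_\eps\in H^1_0(\mathcal N_\eps)$. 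The Dirichlet principle then gives
\[
\int_{\Omega_\eps}|\nabla v|^2=\int_{\Omega_\eps}|\nabla w|^2-\int_{\mathcal N_\eps}|\nabla(w-v_\eps)|^2\le 2\lambda_1(B)-\int_{\mathcal N_\eps}|\nabla(w-v_\eps)|^2+o(\eps^{N/2}),
\]
whereas $|w|,|v_\eps|\lesssim\sqrt\eps$ on $\mathcal N_\eps$ and $|\mathcal N_\eps|\le\omega_N C^N\eps^{N/2}$ force $\int_{\Omega_\eps}v^2=2+o(\eps^{N/2})$. Thus everything reduces to the lower bound
\[
\int_{\mathcal N_\eps}|\nabla(w-v_\eps)|^2\ \ge\ \gamma\,\eps^{N/2}\qquad\text{for some fixed }\gamma>0\text{ and all small }\eps,
\]
which gives $\Ray_{\Omega_\eps}(v)\le\lambda_1(B)-\tfrac{\gamma}{4}\eps^{N/2}+o(\eps^{N/2})$, hence~\eqref{seconda} with any $\gamma_1<\gamma/4$.

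This last lower bound — the heart of the proof — I would establish by a blow-up. Rescaling $x=\sqrt\eps\,\xi$, $\widetilde w(\xi):=\eps^{-1/2}w(\sqrt\eps\,\xi)$, $\widetilde v_\eps(\xi):=\eps^{-1/2}v_\eps(\sqrt\eps\,\xi)$, one has $\eps^{-N/2}\int_{\mathcal N_\eps}|\nabla(w-v_\eps)|^2=\int_{B(0,C)\cap(\Omega_\eps/\sqrt\eps)}|\nabla(\widetilde w-\widetilde v_\eps)|^2$; as $\eps\to0$ the rescaled domain $\Omega_\eps/\sqrt\eps$ converges to $\R^N\setminus\{x_1=0,\ |x'|\ge\sqrt2\}$, $\widetilde w\to c_0|x_1|$ in $H^1_{\mathrm{loc}}$, and $\widetilde v_\eps$ converges to the harmonic extension $v_\infty$ of $c_0|x_1|$ from $\partial B(0,C)$ to $B(0,C)$, constrained to vanish on the annular disk $D:=\{x_1=0,\ \sqrt2\le|x'|\le C\}$. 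Since $c_0|x_1|$ is an admissible competitor for $v_\infty$ that fails to be harmonic across the open neck $\{x_1=0,\ |x'|<\sqrt2\}\subset B(0,C)\setminus D$, we have $\int_{B(0,C)}|\nabla(c_0|x_1|-v_\infty)|^2>0$, and passing to the limit yields the bound with $\gamma=\tfrac12\int_{B(0,C)}|\nabla(c_0|x_1|-v_\infty)|^2$. The main obstacle is making this blow-up rigorous — above all, proving $\widetilde v_\eps\to v_\infty$ strongly in $H^1$, i.e.\ that the Dirichlet condition on the thin, almost-cuspidal rim $\partial\Omega_\eps\cap B(0,C\sqrt\eps)$ survives in the limit as the constraint ``$v_\infty=0$ on $D$'' (the rim collapses onto a codimension-one piece of $\{x_1=0\}$, which has \emph{positive} capacity, so this effect is genuinely present); together with the quantitative expansion $\phi_\pm=c_0\max(\pm x_1,0)+O(\eps)$ near the neck, this is where the real work lies, the remainder being the bookkeeping carried out above.
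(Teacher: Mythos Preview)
Your proposal is correct in outline and would yield~\eqref{seconda}, but it follows a genuinely different route from the paper's proof. The paper also builds a competitor starting from the first eigenfunction $u$ of a single ball $B_\eps$, but instead of invoking harmonic replacement and a blow-up it makes an \emph{explicit} modification: on the cone $\T$ bounded by $\{\sqrt{2\eps-\eps^2}-x_1-|x'|=0\}$ and $\{x_1=0\}$ it sets $\tilde u=u+\tfrac{\kappa}{2}\big(\sqrt{2\eps-\eps^2}-x_1-|x'|\big)$, checks by hand that $|D\tilde u|^2=|Du|^2-\tfrac{\kappa^2}{2}+O(\sqrt\eps)$ on $\T$, observes $|\T|\sim\eps^{N/2}$, and then extends by even reflection to all of $\Omega_\eps$. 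This gives the $\eps^{N/2}$ energy drop by a direct computation, with no limits, no compactness, and an explicit constant $C_N\kappa^2$. Your approach is more conceptual---harmonic replacement is the \emph{optimal} local modification, so the energy gain is automatic from the Dirichlet principle---but the price is the blow-up you yourself flag as ``where the real work lies'': proving $\widetilde v_\eps\to v_\infty$ with the slit constraint surviving in the limit is a genuine capacity/Mosco-convergence argument, and while doable it is considerably heavier than the paper's four-line gradient comparison. In short, both proofs exploit that $c_0|x_1|$ wastes Dirichlet energy at the neck; the paper removes a fixed fraction of that waste by an explicit ansatz, you remove \emph{all} of it and then quantify by rescaling.
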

\begin{proof}
Let $B_\eps$ be the ball of unit radius centered at $(1-\eps,0)$, so that $B_\eps\subseteq \Omega_\eps$ and in particular $\Omega_\eps^+=B_\eps \cap \{x_1>0\}$. Let also $u$ be a first Dirichlet eigenfunction of $B_\eps$ with unit $L^2$ norm, and denote by $\T$ the region (shaded in Figure~\ref{lastest}) bounded by the right circular conical surface $\{\sqrt{2\eps-\eps^2} -x_1- |x'|=0 \}$ and by the plane $\{x_1=0\}$. 

%\begin{align*}
%R\equiv \big(0, \sqrt{2\eps - \eps^2} \big)\,, &&
%S\equiv \big(0, -\sqrt{2\eps - \eps^2} \big)\,, &&
%T \equiv \big(\sqrt{2\eps - \eps^2},0 \big)\,.
%\end{align*}
\begin{figure}[htbp]
\input{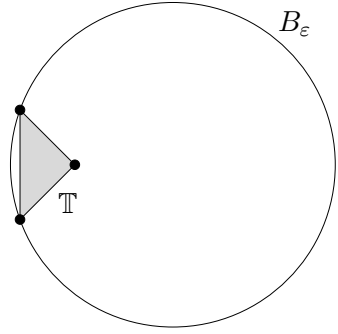}
\caption{The ball $B_\eps$ and the cone $\T$ (shaded) in the proof of Lemma~\ref{upper1}}\label{lastest}
\end{figure}

Since the normal derivative of $u$ is constantly $\kappa$ on $\partial B_\eps^+$, we know that
\begin{equation}\label{sim1}
Du (x_1,x') = Du ( 0,x') + O\big(\sqrt\eps\big) = \big(\kappa, 0\big) + O\big(\sqrt\eps\big) \quad \hbox{on $\T$}\,.
\end{equation}
Let us now define the function $\tilde u:\Omega_\eps^+\to \R$ as
\[
\tilde u(x_1,x')  := \left\{\begin{array}{ll}
u(x_1,x') & \hbox{if $(x_1,x')\notin\T$}\,, \\
u(x_1,x') + \bal\frac\kappa 2\eal \, \Big(\sqrt{2\eps-\eps^2}-x_1 - |x'|  \Big) & \hbox{if $(x_1,x')\in\T$}\,.
\end{array}\right.
\]
It is immediate to observe that $\tilde u=u$ on the surface $\left\{\sqrt{2\eps-\eps^2} -x_1- |x'|=0\right\}\cap\{ x_1 >0 \}$, so that $\tilde u \in H^1(\Omega_\eps^+)$. Notice that $\tilde u\notin H^{1}_0(\Omega_\eps^+)$ since $\tilde u$ does not vanish on $\{x_1=0\}\cap \partial \Omega^+_\eps$. By construction and recalling~(\ref{sim1}),
\begin{equation}\label{sim2}
D\tilde u (x_1,x') = D u (x_1,x') + \Big( -\frac \kappa 2 , -\,\frac\kappa 2 \, \frac {x'} {|x'|}\Big)
= \Big(\frac \kappa 2, - \,\frac \kappa 2 \, \frac {x'} {|x'|}\Big) + O\big(\sqrt\eps\big) \quad \hbox{on $\T$}\,.
\end{equation}
Since $\tilde u \geq u$ on $\Omega_\eps^+$, and recalling that $u\in H^1_0(B_\eps^+)$, one clearly has
\begin{equation}\label{squa1}
\int_{\Omega_\eps^+} \tilde u^2 dx \geq \int_{\Omega_\eps^+} u^2 dx= \int_{B_\eps^+} u^2 dx + O(\eps^{(N+5)/2})
= 1  + O(\eps^{(N+5)/2})\,,
\end{equation}
since the small region $B_\eps\setminus \Omega_\eps^+$ has volume $O(\eps^{(N+1)/2})$, and on this region $u=O(\eps)$.\par
On the other hand, comparing~(\ref{sim1}) and~(\ref{sim2}), one has
\[
\big|D \tilde u\big|^2\, =| D u \big|^2 - \frac {\kappa^2} 2 + O\big(\sqrt\eps\big)\quad \hbox{on $\T$}\,,
\]
and since the volume of $\T$ is $\frac{\omega_{N-1}}N\big(2\eps-\eps^2\big)^{N/2}$
we deduce
\begin{equation}\label{squa2}\begin{split}
\int_{\Omega_\eps^+}\big| D \tilde u\big|^2\, dx &=
\int_{\Omega_\eps^+}\big| D u \big|^2\, dx - \frac{\omega_{N-1}}N\big(2\eps-\eps^2\big)^{N/2}\,\bigg(\frac{\kappa^2}{2} + O(\sqrt \eps)\bigg)\\
&=\int_{\Omega_\eps^+}\big| D u \big|^2\, dx - \frac {\omega_{N-1}}N\kappa^2 2^{(N/2-1)}\eps^{N/2} + O(\eps^{(N+1)/2})\\
&=\int_{B_\eps^+} \big| D u \big|^2\, dx - C_N\kappa^2\eps^{N/2} + O(\eps^{(N+1)/2})\,,
\end{split}\end{equation}
where $C_N=\frac {\omega_{N-1}}N 2^{(N/2-1)}$.\par
Therefore, by~(\ref{squa1}) and~(\ref{squa2}) we obtain
\[
\begin{split}
\Ray_{\Omega_\eps^+} (\tilde u) = \frac{\bal\int_{\Omega_\eps^+} \big| D\tilde u\big|^2\, dx\eal}{\bal\int_{\Omega_\eps^+}\tilde u^2\, dx\eal} &\leq \Ray_{B_\eps^+} (u) - C_N\kappa^2 \eps^{N/2} + O(\eps^{(N+1)/2})\\
&= \lambda_1(B) - C_N\kappa^2 \eps^{N/2} + O(\eps^{(N+1)/2})\,.
\end{split}
\]
We can finally extend $\tilde u$ to the whole $\Omega_\eps$, simply defining $\tilde u(x_1,x')= \tilde u(|x_1|,x')$ on $\Omega_\eps^-$. By construction, $\tilde u\in H^1_0(\Omega_\eps)$, and
\[
\lambda_1(\Omega_\eps) \leq \Ray_{\Omega_\eps} (\tilde u) = \Ray_{\Omega_\eps^+} (\tilde u)
\leq \lambda_1(B) - C_N\kappa^2 \eps^{N/2} + O(\eps^{(N+1)/2})\,,
\]
so that~(\ref{seconda}) follows and the proof is concluded.
\end{proof}

\begin{lemma}\label{upper2}
There exists a constant $\gamma_2>0$ such that for every $\eps\ll 1$, it is
\begin{equation}
\label{prima}
\lambda_2(\Omega_\eps)\leq \lambda_1(B)+\gamma_2\,\eps^{(N+1)/2}.
\end{equation}
\end{lemma}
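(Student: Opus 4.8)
The plan is to bound $\lambda_2(\Omega_\eps)$ from above by exhibiting a two-dimensional subspace of $H^1_0(\Omega_\eps)$ on which the Rayleigh quotient does not exceed $\lambda_1(B)+\gamma_2\,\eps^{(N+1)/2}$, and then to conclude by the usual min--max (Courant--Fischer) characterisation of $\lambda_2$. The geometric point is that $\Omega_\eps^+$ and $\Omega_\eps^-$ are \emph{disjoint} open subsets of $\Omega_\eps$, so that any $\psi\in H^1_0(\Omega_\eps^+)$, extended by $0$ on $\Omega_\eps^-$, belongs to $H^1_0(\Omega_\eps)$; together with its mirror image $x\mapsto\psi(-x_1,x')$, supported in $\overline{\Omega_\eps^-}$, it spans a two-dimensional subspace on which, the two functions having essentially disjoint supports, the Rayleigh quotient is identically equal to $\Ray_{\Omega_\eps^+}(\psi)$. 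Hence it suffices to produce a single $\psi\in H^1_0(\Omega_\eps^+)$ with $\Ray_{\Omega_\eps^+}(\psi)\le \lambda_1(B)+\gamma_2\,\eps^{(N+1)/2}$. (Equivalently and more quickly: $\Omega_\eps^+\cup\Omega_\eps^-\subseteq\Omega_\eps$ and domain monotonicity give $\lambda_2(\Omega_\eps)\le\lambda_2(\Omega_\eps^+\cup\Omega_\eps^-)=\lambda_1(\Omega_\eps^+)$, which reduces~(\ref{prima}) to the same test-function estimate.)

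To build $\psi$ I would take $u$, a first Dirichlet eigenfunction of the unit ball $B_\eps$ centred at $(1-\eps,0)$ --- for which $\Omega_\eps^+=B_\eps\cap\{x_1>0\}$ --- normalised so that $\|u\|_{L^2(B_\eps)}=1$, and truncate it near the seam $\{x_1=0\}$ by setting $\psi:=u\,\eta$, where $\eta=\eta(x_1)$ is a cutoff with $\eta\equiv0$ on $\{x_1\le\eps\}$, $\eta\equiv1$ on $\{x_1\ge2\eps\}$ and $|\eta'|\le1/\eps$. Since $u$ vanishes on $\partial B_\eps$ and $\eta$ vanishes near $\{x_1=0\}$, one has $\psi\in H^1_0(\Omega_\eps^+)$, and $\psi$ coincides with $u$ outside the thin layer $\Lambda_\eps:=\{0<x_1<2\eps\}\cap B_\eps$. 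The quantitative facts one needs are: $|\Lambda_\eps|=O(\eps^{(N+1)/2})$, because $\Lambda_\eps$ has thickness $\sim\eps$ and cross-sections of radius $\sim\sqrt\eps$; $|u|=O(\eps)$ on $\Lambda_\eps$, because its points lie within distance $O(\eps)$ of $\partial B_\eps$ and $u$ is Lipschitz; and $|\nabla u|=O(1)$ on $B_\eps$. Expanding $|\nabla\psi|^2=\eta^2|\nabla u|^2+2\eta u\,\eta'\,\partial_{x_1}u+u^2(\eta')^2$ and integrating, the first term is at most $\int_{B_\eps}|\nabla u|^2=\lambda_1(B)$, the last is $O(\eps^2\cdot\eps^{-2}\cdot|\Lambda_\eps|)=O(\eps^{(N+1)/2})$, and the cross term is, by Cauchy--Schwarz, bounded by the geometric mean of $\int_{\Lambda_\eps}u^2(\eta')^2$ and $\int_{\Lambda_\eps}|\nabla u|^2$, hence again $O(\eps^{(N+1)/2})$. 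For the denominator, $\int_{\Omega_\eps^+}\psi^2=\int_{\Omega_\eps^+}u^2-\int_{\Omega_\eps^+}u^2(1-\eta^2)=1-O(\eps^{(N+5)/2})$, since $\int_{B_\eps\setminus\Omega_\eps^+}u^2=O(\eps^{(N+5)/2})$ (a cap of height $\eps$ on which $|u|=O(\eps)$) and $1-\eta^2$ is supported in $\Lambda_\eps$ where $|u|=O(\eps)$. Dividing yields $\Ray_{\Omega_\eps^+}(\psi)\le\lambda_1(B)+\gamma_2\,\eps^{(N+1)/2}$, hence~(\ref{prima}).

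The only delicate point is the truncation estimate: the exponent $(N+1)/2$ comes from the competition inside $\Lambda_\eps$ between $(\eta')^2\sim\eps^{-2}$ and the smallness $u^2\sim\eps^2$ of the eigenfunction, so that a transition width comparable to $\eps$ is essentially forced, and one must then verify the spherical-cap volume bound $|\Lambda_\eps|=O(\eps^{(N+1)/2})$ (a routine but slightly fussy computation). Everything else --- the min--max principle, the fact that extension by zero maps $H^1_0(\Omega_\eps^+)$ into $H^1_0(\Omega_\eps)$ (or domain monotonicity, for the alternative route), the Lipschitz bound and the normalisation of $u$ --- is standard.
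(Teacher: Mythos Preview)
Your proposal is correct and follows essentially the same route as the paper: reduce to bounding $\lambda_1(\Omega_\eps^+)$, then test with the first eigenfunction of the translated ball $B_\eps$ multiplied by a cutoff supported away from $\{x_1=0\}$, and estimate the three terms in the expanded Rayleigh quotient exactly as you do. The only cosmetic differences are that the paper obtains $\lambda_2(\Omega_\eps)\le\lambda_1(\Omega_\eps^+)$ by observing that the odd extension of the first eigenfunction of $\Omega_\eps^+$ is a sign-changing eigenfunction of the connected set $\Omega_\eps$ (rather than via your min--max/monotonicity argument), and uses a general Lipschitz cutoff $\xi_\eps$ with transition in $\{0<x_1<\eps\}$ instead of your one-variable $\eta(x_1)$ with transition in $\{\eps<x_1<2\eps\}$.
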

\begin{proof}
First of all, we start underlining that
\begin{equation}\label{estimateleq}
\lambda_2(\Omega_\eps)\leq \lambda_1(\Omega_\eps^+)\,;
\end{equation}
in fact if we define%this is a very simple consequence of the fact that if we define
\[
\tilde u(x_1,x'):=\left\{\begin{array}{ll}
u_\eps(x_1,x')\,,& \mbox{ if } x_1\in \Omega^+_\eps\,,\\
-u_\eps(-x_1,x')\,,& \mbox{ if } x_1\in \Omega^-_\eps\,,
\end{array}\right.
\]
then by construction it readily follows that $-\Delta \tilde u = \lambda_1(\Omega_\eps^+) \tilde u$. As a consequance $\lambda_1(\Omega_\eps^+)$ is an eigenvalue of $\Omega_\eps$, say $\lambda_1(\Omega_\eps^+)=\lambda_\ell(\Omega_\eps)$. Since $\Omega_\eps$ is connected and $\tilde u$ changes sign, it is not possible $\ell=1$, %(indeed it would be even possible to prove that $\ell=2$ provided $\epsilon$ is small enough), 
hence
$$\lambda_2(\Omega_\eps)\leq \lambda_\ell(\Omega_\eps)=\lambda_1(\Omega_\eps^+)\,.$$
%well-known \emph{min-max principle} (see for instance~\cite[Chapter 1]{He}), which says that for every open set $\Omega\subseteq\R^N$
%\begin{equation}\label{minmaxprinc}
%\lambda_j(\Omega) = \min \bigg\{ \max\Big\{ \Ray(w, \Omega),\, w\in K_j\setminus\{0\}\Big\}\bigg\}\,,
%\end{equation}
%where the minimum is taken among all the $j$-dimensional subspaces $K_j$ of $H^1_0(\Omega)$. In fact, let $u^+\in H^1_0(\Omega^+_\eps)$ be a first eigenfunction of $\Omega^+_\eps$, and let $u^-\in H^1_0(\Omega^-_\eps)$ be the function obtained by the obvious reflection. Calling $K_2\subseteq H^1_0(\Omega_\eps)$ the two-dimensional set spanned by $u^+$ and $u^-$, which are clearly orthogonal, and observing that
%\[
%\lambda_1(\Omega^+_\eps)= \Ray(\Omega_\eps, u^+) = \Ray(\Omega_\eps^+, u^+) = 
%\Ray(\Omega_\eps^-, u^-) = \Ray(\Omega_\eps, u^-)\,,
%\]
%we see that~(\ref{estimateleq}) directly follows from~(\ref{minmaxprinc}).\par

It is then enough for us to estimate $\lambda_1(\Omega_\eps^+)$. To this aim, define the set
\[
\oeps :=\big\{(x_1,x')\in\Omega^+_\eps\, :\, x_1\ge \eps\big\},
\]
and take a Lipschitz cut-off function $\xi_\eps \in W^{1,\infty}(\Omega^+_\eps)$ such that
\begin{align*}
0\leq \xi_\eps\leq 1\ \hbox{on}\ \Omega^+_\eps\,, && \xi_\eps\equiv 1\  \hbox{on}\ \oeps\,, && 
\xi_\eps \equiv 0\ \hbox{on}\ \partial\Omega^+_\eps\cap \{x_1=0\}\,,
&& \|\nabla \xi_\eps\|_\infty\le L\, \eps^{-1}\,.
\end{align*}
As in Lemma~\ref{upper1}, let again $u$ be a first eigenfunction of the ball $B_\eps$ of radius $1$ centered at $(1-\eps,0)$ having unit $L^2$ norm, and define on $\Omega_\eps$ the function $\varphi=u\, \xi_\eps$. Since by construction $\varphi$ belongs to $H^1_0(\Omega_\eps)$, we obtain
\begin{equation}
\label{puthere}
\lambda_1(\Omega^+_\eps)\leq \Ray(\varphi, \Omega^+_\eps)
=\frac{\bal\int_{\Omega^+_\eps} \Big[|\nabla u|^2\, \xi_\eps^2+  |\nabla \xi_\eps|^2\, u^2+ 2\, u\, \xi_\eps\,\langle\nabla u,\nabla\xi_\eps \rangle\Big]\, dx\eal}{\bal\int_{\Omega^+_\eps} u^2\, \xi_\eps^2\, dx\eal}\,.
\end{equation}
We can start estimating the denominator very similarly to what already done in~(\ref{squa1}). Indeed, recalling that $\big|\Omega^+_\eps\setminus \oeps\big|= O(\eps^{(N+1)/2})$ and that in that small region $u=O(\eps)$, we have
\[
\int_{\Omega^+_\eps} u^2\, \xi_\eps^2\, dx
=\int_{B_\eps} u^2\, dx - \int_{B_\eps\setminus \Omega^+_\eps} u^2\, dx -\int_{\Omega_\eps^+\setminus \oeps} u^2 (1-\xi_\eps^2)\, dx = 1 +O(\eps^{(N+5)/2})\,.
\]
Let us pass to study the numerator: first of all, being $0\leq \xi_\eps\leq 1$ we have
\[
\int_{\Omega^+_\eps} |\nabla u|^2\, \xi_\eps^2\, dx\leq \int_{B_\eps} |\nabla u|^2 \, dx= \lambda_1(B)\,.
\]
Moreover,
\[
\int_{\Omega^+_\eps} |\nabla \xi_\eps|^2\, u^2\, dx=\int_{\Omega^+_\eps\setminus\mathcal{O}_\eps} |\nabla \xi_\eps|^2\, u^2\, dx\le \frac{L^2}{\eps^2}\, |\Omega^+_\eps\setminus\mathcal{O}_\eps|\, \|u\|^2_{L^\infty(\Omega_\eps^+\setminus \mathcal{O}_\eps)} = O(\eps^{(N+1)/2})\,,
\]
and in the same way
\[
\int_{\Omega^+_\eps} u\, \xi_\eps\,\langle \nabla u, \nabla\xi_\eps\rangle\, dx 
\leq \int_{\Omega^+_\eps\setminus\mathcal{O}_\eps} |u| \, |\nabla u|\,|\nabla \xi_\eps|\, dx 
= O(\eps^{(N+1)/2})\,.
\]
Summarizing, by~(\ref{puthere}) we deduce
\[
\lambda_1(\Omega^+_\eps)\leq \lambda_1(B) + O(\eps^{(N+1)/2})\,,
\]
thus by~(\ref{estimateleq}) we get the thesis.
\end{proof}

%\begin{oss}
%It is simple to observe that the inequality in~(\ref{estimateleq}) is indeed an equality for $\epsilon$ small enough. In fact, if $u_\eps$ is a first eigenfunction of $\Omega_\eps^+$, and we define
%\[
%\tilde u(x_1,x'):=\left\{\begin{array}{ll}
%u_\eps(x_1,x')\,,& \mbox{ if } x_1\in \Omega^+_\eps\,,\\
%-u_\eps(-x_1,x')\,,& \mbox{ if } x_1\in \Omega^-_\eps\,,
%\end{array}\right.
%\]
%then by construction it readily follows that $-\Delta \tilde u = \lambda_1(\Omega_\eps^+) \tilde u$. As a consequence, $\lambda_1(\Omega_\eps^+)$ is an eigenvalue of $\Omega_\eps$, say $\lambda_1(\Omega_\eps^+)=\lambda_\ell(\Omega_\eps)$. Since $\Omega_\eps$ is connected and $\tilde u$ changes sign, it is not possible $\ell=1$, and on the other hand by a continuity argument it is easy to exclude also $\ell\geq 3$ provided that $\eps$ is small enough.
%\end{oss}

We are now ready to conclude the paper by giving the proof of the Theorem.

\begin{proof}[Proof of the Theorem.]
For any small $\eps>0$, we define $\widetilde \Omega_\eps = t_\eps \, \Omega_\eps$, where $t_\eps = \sqrt[N]{\omega_N/|\Omega_\eps|}$ so that $|\widetilde\Omega_\eps|=\omega_N$. Notice that
\[
|\Omega_\eps| = 2 \omega_N + O(\eps^{(N+1)/2})\,,
\]
thus $t_\eps = 2^{-1/N} + O(\eps^{(N+1)/2})$. Recalling the trivial rescaling formula $\lambda_i(t\Omega)=t^{-2}\lambda_i(\Omega)$, valid for any natural $i$, any positive $t$ and any open set $\Omega$, we can then estimate by Lemma~\ref{upper1} and Lemma~\ref{upper2}
\begin{gather*}
\lambda_1(\widetilde\Omega_\eps) = \left(\frac{|\Omega_\eps|}{\omega_N}\right)^{2/N} \lambda_1(\Omega_\eps)
\leq 2^{2/N}\lambda_1(B) -2^{2/N}\gamma_1\eps^{N/2} + O (\eps^{(N+1)/2})\,, \\
\lambda_2(\widetilde\Omega_\eps) = \left(\frac{|\Omega_\eps|}{\omega_N}\right)^{2/N} \lambda_2(\Omega_\eps)
\leq 2^{2/N}\lambda_1(B)  + O (\eps^{(N+1)/2})\,.
\end{gather*}
Since $\lambda_1(\Theta)=\lambda_2(\Theta) = 2^{2/N}\lambda_1(B)$, the two above estimates give
\[
\lim_{\eps\to 0} \frac{\lambda_2(\widetilde\Omega_\eps)-\lambda_2(\Theta)}{\lambda_1(\Theta)-\lambda_1(\widetilde\Omega_\eps)} =0\,,
\]
which as already noticed in~(\ref{gothere}) implies the thesis.
\end{proof}

\subsection*{Acknowledgements}
The three authors have been supported by the ERC Starting Grant n. 258685; L. B. and A. P. have been supported also by the ERC Advanced Grant n. 226234.

\end{document}